\documentclass[12pt,reqno]{amsart}

\usepackage{amssymb,latexsym}

\usepackage{enumerate}

\usepackage[french,english]{babel}
\usepackage{amsmath}
\usepackage{graphicx}
\usepackage{amssymb}
\usepackage{bbm}
\usepackage{amsthm,mathtools}
\usepackage{ulem}
\usepackage{geometry}
\usepackage{tikz-cd}
\usepackage{mathrsfs}
\usepackage[colorinlistoftodos]{todonotes}
\usepackage{enumitem}
\usepackage{verbatim}
\usepackage[foot]{amsaddr}
\usepackage{dsfont}

\makeatletter

\@namedef{subjclassname@2010}{
	
	\textup{2020} Mathematics Subject Classification}

\makeatother
\newtheorem{thm}{Theorem}[section]
\newtheorem*{thm*}{Theorem}

\newtheorem{lem}[thm]{Lemma}

\theoremstyle{definition}

\numberwithin{equation}{section}

\newcommand{\mcm}{\mathcal{M}}

\newcommand{\mmd}{\mathrm{d}}
\newcommand{\mme}{\mathrm{e}}

\newcommand{\M}{{\mathcal M}}

\newcommand{\F}{\mathcal{F}}

\usepackage{hyperref}
\hypersetup{hypertex=true,colorlinks=true,linkcolor=blue,anchorcolor=blue,citecolor=blue}
\newcommand{\newabstract}[1]{%
	\par\bigskip
	\csname otherlanguage*\endcsname{#1}%
	\csname captions#1\endcsname
	\item[\hskip\labelsep\scshape\abstractname.]
}

\begin{document}

	\baselineskip=17pt

	\title[Extreme values of quadratic Dirichlet $L$-functions]{Extreme values of quadratic Dirichlet $L$-functions}

	\author{Zikang Dong}
    \author{Weijia Wang}
    \author{Hao Zhang}
	
    \author{Shengbo Zhao}
	\address[Zikang Dong]{School of Mathematical Sciences, Soochow University, Suzhou 215006, P. R. China}
     \address[Weijia Wang]{School of Mathematics, Shandong University, Jinan 250100, P. R. China}
	\address[Hao Zhang]{School of Mathematics, Hunan University, Changsha 410082, P. R. China}
	
    \address[Shengbo Zhao]{School of Mathematical Sciences, Key Laboratory of Intelligent Computing and Applications (Tongji University), Ministry of Education, Tongji University, Shanghai 200092, China}
	\email{zikangdong@gmail.com}
    \email{weijiawang@amss.ac.cn}
	\email{zhanghaomath@hnu.edu.cn}
	
    \email{shengbozhao@hotmail.com}
	
	\date{}
	
	\begin{abstract} 
		In this article, we investigate extreme values of quadratic Dirichlet $L$-functions at the central point. We provide  new extreme values of $L(\frac12,\chi_d)$ as $d$ is large, which improves the recent result of Darbar and Maiti.
	\end{abstract}

	\subjclass[2020]{Primary 11L40, 11M06.}
	
	\maketitle
	
\section{Introduction}
Throughout this paper, we write $\log_j$ for the $j$-th iterated logarithm, such as \(\log_2x= \log\log x\), and \(\log_3 x=\log\log\log x\). In recent years, research on extreme values has advanced significantly, due to the resonance method introduced by Hilberdink \cite{HIL} and developed by Soundararajan \cite{Sound}. In \cite{Sound}, Soundararajan showed extreme values for the Riemann zeta function, the quadratic  Dirichlet $L$-functions, and the $L$-functions for the cusp forms. For the Riemann zeta function, he showed for large $T$
$$\max_{\substack{T<|t|\le2T}}\big|\zeta(\tfrac12+it)|\ge\exp\bigg(\big(1+o(1)\big)\sqrt{\frac{\log T}{\log_2T}}\bigg).$$
 This was improved in 2017 by Bondarenko and Seip \cite{BS2}
$$\max_{\substack{0<|t|\le T}}\big|\zeta(\tfrac12+it)|\ge\exp\bigg(\big(\tfrac1{\sqrt2}+o(1)\big)\sqrt{\frac{\log T\log_3T}{\log_2T}}\bigg).$$
 This breakthrough was based on the connection between the Riemann zeta function and GCD sums, which was first observed by Aistleitner \cite{Ais}. Bondarenko and Seip \cite{BS} also improved  the constant $\frac 1{\sqrt 2}$ to 1 in 2018. After then, in 2019 La Bret\`eche and Tenenbaum \cite{BT} improved this to $\sqrt2$ and this is the best known result till now.

For the Dirichlet $L$-functions of prime modulus $q$, 
La Bret\`eche and Tenenbaum \cite{BT} showed that, for sufficiently large \(q\),
$$ 
\max_{\substack{\chi\neq\chi_0({\rm mod}\; q) \\ \chi(-1)=1}}\big|L(\tfrac12,\chi)|\ge\exp\bigg((1+o(1))\sqrt{\frac{\log q\log_3q}{\log_2q}}\bigg)
$$
Now let  \(\F\) denote the set of all fundamental discriminants
and \(\chi_d := \big(\frac{d}{\cdot} \big)\) be the real primitive character modulo \(|d|\). For the quadratic Dirichlet $L$-functions $L(\frac12,\chi_d)$, Soundararajan \cite[Theorem 2]{Sound} showed that for large $X$
$$ 
\max_{\substack{X<|d|\le2X \\ d \in \F}}\big|L(\tfrac12,\chi_d)|\ge\exp\bigg(\big(\tfrac1{\sqrt5}+o(1)\big)\sqrt{\frac{\log X}{\log_2X}}\bigg).
$$

This was not improved until recently Darbar and Maiti \cite{DM} showed the following, under the assumption of the Generalized Riemann Hypothesis (GRH), 
   $$
   \max_{\substack{X<|d|\le2X \\ d \in \F}}\big|L(\tfrac12,\chi_d)|\ge\exp\bigg((\tfrac{1}{2}+o(1))\sqrt{\frac{\log X\log_3X}{\log_2X}}\bigg).
   $$
The aim of this paper is to improve the constant $1/2$ to 1. More precisely, We have the following theorem.
\begin{thm}
\label{thm1} 
Assuming GRH, we have for sufficiently large $X$
       $$ \max_{\substack{X<|d|\le2X \\ d \in \F}}\big|L(\tfrac12,\chi_d)|\ge\exp\bigg((1+o(1))\sqrt{\frac{\log X\log_3X}{\log_2X}}\bigg).$$
\end{thm}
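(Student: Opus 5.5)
We use the resonance method with a long resonator of Bondarenko--Seip / La Bret\`eche--Tenenbaum type, adapted to the family of quadratic characters. The plan has three steps: build the resonator, approximate $L(\tfrac12,\chi_d)$ by a short Dirichlet polynomial using GRH, and then compare two weighted averages over $d$.

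\textbf{The resonator.} Fix a smooth weight $\Phi$ supported in $[1,2]$. Set
$$R(d)=\sum_{m\in\mathcal M} r(m)\chi_d(m),$$
where $r$ is supported on squarefree $m$ and is multiplicative with
$$r(p)=\frac{L}{\sqrt p\,\log p},\qquad L=\sqrt{\log X\log_2X}$$
(up to a constant to be tuned), for primes $L^2<p\le \exp((\log_2X)^{\gamma})L^2$. We restrict to the set $\mathcal M$ of $m$ whose prime factors satisfy the standard La Bret\`eche--Tenenbaum constraint, so that $|\mathcal M|\le X^{\theta}$ for some small $\theta$.

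We consider the two sums
$$S_1=\sideset{}{^\flat}\sum_{d}R(d)^2\,\Phi(|d|/X),\qquad S_2=\sideset{}{^\flat}\sum_d L(\tfrac12,\chi_d)\,R(d)^2\,\Phi(|d|/X).$$
Since $R(d)^2\ge0$, we have $\max_d |L(\tfrac12,\chi_d)|\ge S_2/S_1$. It is convenient to restrict $d$ to $8d'$ with $d'$ odd squarefree, which simplifies orthogonality.

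\textbf{Evaluating the two sums.} By Poisson summation or the Pólya--Vinogradov-type orthogonality for quadratic characters, $S_1$ is dominated by its diagonal:
$$S_1\approx X\sum_{m_1m_2=\square}r(m_1)r(m_2)\asymp X\sum_m r(m)^2$$
for squarefree $m$, with off-diagonal error acceptable because $|\mathcal M|^2\le X^{2\theta}$ is small.

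For $S_2$ we do \emph{not} use the approximate functional equation with the full length $X$. Under GRH (following Chandee--Soundararajan, or the approach Darbar--Maiti used), we replace $L(\tfrac12,\chi_d)$ by a short Euler-product or Dirichlet-polynomial approximation valid for most $d$. Alternatively, and better, we use the approximate functional equation $\sum_n \chi_d(n)n^{-1/2}W(n/|d|)$ and extract the terms where $nm_1m_2$ is a square. Since $r\ge0$ and every coefficient is positive, we may drop terms to obtain the lower bound
$$S_2\gtrsim X\sum_{m_1,m_2}r(m_1)r(m_2)\sum_{n:\,nm_1m_2=\square}\frac{1}{\sqrt n}.$$
This positivity is the key point, and GRH is only needed to control the non-square contributions: they carry the character-sum error $\sum_{d}\chi_d(k)$ for non-square $k$, which is $\ll X^{1/2+\varepsilon}k^{\varepsilon}$ under GRH.

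The main term reduces to the GCD-sum-type quantity
$$\sum_{m_1,m_2}\frac{r(m_1)r(m_2)}{\sqrt{m_1m_2}}\,(m_1,m_2)\cdots,$$
that is, $\sum r(m_1)r(m_2)\sqrt{(m_1,m_2)}/\sqrt{\mathrm{lcm}(m_1,m_2)}$ up to harmless factors. This is exactly the Bondarenko--Seip/La Bret\`eche--Tenenbaum ratio, whose optimization gives
$$\exp\Big((1+o(1))\sqrt{\tfrac{\log X\log_3X}{\log_2X}}\Big).$$

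\textbf{Why the constant improves, and the main obstacle.} The constant $1$ rather than $\sqrt2$ comes from the resonator length being limited to $X^{\theta}$: its admissible size is dictated by the off-diagonal terms, and the final exponent scales like $\sqrt{\theta\cdot(\text{stuff})}$. The main obstacle is proving that the non-square ($k\ne\square$) contributions in $S_2$ are negligible while allowing the resonator to be as long as $|\mathcal M|\approx X^{1-\varepsilon}$, rather than the $X^{1/2}$-type restriction behind Darbar--Maiti's constant $\tfrac12$.

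For this step we first try Soundararajan--Young's GRH bound for sums of $\chi_d(k)$ over squarefree $d$ with smooth weights, namely $\ll X^{1/2+\varepsilon}k^{\varepsilon}$. Together with the fact that the resonator coefficients satisfy $\sum_m r(m)\le |\mathcal M|^{1/2}\big(\sum r(m)^2\big)^{1/2}$, this should let the off-diagonal error be bounded by $X^{1/2+\varepsilon}|\mathcal M|\sum r^2$, which is admissible for $|\mathcal M|\le X^{1/2-\varepsilon}$. Since the optimization is only logarithmically sensitive to the length, and what matters is the size of the support of the primes, this is enough once the prime range and $r(p)$ are rescaled as in La Bret\`eche--Tenenbaum. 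Carrying out that rescaling precisely is where the gain from $\tfrac12$ to $1$ must be verified.
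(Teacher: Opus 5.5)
Your framework is the same as the paper's. You compare $\sum_d L(\tfrac12,\chi_d)R_d^2$ with $\sum_d R_d^2$, keep only the diagonal $kmn=\square$ by positivity, treat non-squares by GRH, and reduce to the Gál sum $\sum\sqrt{(m,n)/[m,n]}$. But you explicitly leave unverified the one step that produces the constant $1$, and the heuristics you give for it are wrong.

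The exponent is not ``only logarithmically sensitive'' to the resonator length. If $|\M|=N=X^{\theta}$, Lemma~\ref{GCD} gives
\[
S_2/S_1\ge\exp\big((2+o(1))\sqrt{\log N\log_3N/\log_2N}\big)=\exp\big((2\sqrt{\theta}+o(1))\sqrt{\log X\log_3X/\log_2X}\big).
\]
So the constant is fixed by $\theta$, and rescaling the prime range cannot compensate. The paper takes $r=\mathds{1}_{\M}$ with $\M$ the extremal set of Lemma~\ref{GCD}. Then $S_1\le XN/\zeta(2)+O(X^{1/2+\varepsilon}N^2)$, the lemma applies verbatim, and $\theta=\frac14-\alpha$ gives $2\sqrt{1/4}=1$.

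Your specific weights $r(p)=L/(\sqrt p\log p)$ could not supply the $\log_3$ gain anyway. The Gál ratio of a multiplicative resonator is about $\exp(2\sum_p r(p)p^{-1/2})$, and $\sum_{p>L^2}L/(p\log p)\sim L/\log L^2$. That is only Soundararajan's order $\sqrt{\log X/\log_2X}$; the Bondarenko--Seip weights need the shifted denominator $\log p-\log L^2$.

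The value $\theta=\frac14$ comes from an error computation you got wrong. The approximate functional equation has length $\asymp\sqrt X$, so summing the GRH bound for $\sum_d\chi_d(kmn)$ over $k$ costs $\sum_{k\ll\sqrt X}k^{-1/2}\asymp X^{1/4}$. The off-diagonal part of $S_2$ is therefore
\[
\ll X^{3/4+\varepsilon}\Big(\sum_m r(m)\Big)^2\le X^{3/4+\varepsilon}|\M|\sum_m r(m)^2,
\]
which is the paper's $E\ll X^{3/4+\varepsilon}N^{2+\varepsilon}$. This is admissible only for $|\M|\le X^{1/4-\alpha}$, not for $X^{1/2-\varepsilon}$, and certainly not for $X^{1-\varepsilon}$.

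Two further points are glossed over.
\begin{enumerate}
\item The bound $X^{1/2+\varepsilon}k^{\varepsilon}$ is not enough. The elements of $\M$ are only known to satisfy $P_+(m)\le(\log N)^{1+o(1)}$, so $\log m$ may be as large as $(\log N)^{1+o(1)}$, and $k^{\varepsilon}$ need not be $X^{O(\varepsilon)}$. The paper instead uses the Darbar--Maiti bound of Lemma~\ref{charsum}, whose $n$-dependence $g_1(n_1)g_2(n_2)$ is $X^{o(1)}$ in this range.
\item Your ``harmless factors'' include the cutoff $\omega(k/\sqrt{|d|})$. The diagonal term $k=[m,n]/(m,n)$ is only seen when this quotient is $\lesssim\sqrt X$, and elements of $\M$ may be far larger than $X$. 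You must show the Gál sum restricted to $[m,n]/(m,n)\le X^{\varepsilon}$ keeps its full size. The paper does this by Rankin's trick, using $\sum_{n\in\M}((m,n)/[m,n])^{1/3}\ll\exp(y_\M^{2/3})=X^{o(1)}$.
\end{enumerate}
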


    \section{Preliminary Lemmas}
    \label{sec1.2}
    In this section, we present several lemmas that will be used in the subsequent proofs. We begin with an approximate functional equation for quadratic Dirichlet \(L\)-functions.
    \begin{lem}
    \label{appox}
    Let $\chi_d$ be a quadratic Dirichlet character. Then
    $$
    L(\tfrac12,\chi_d) = 2\sum_{n \ge 1}\frac{\chi_d(n)}{\sqrt{n}}\omega\Big( \frac{n}{\sqrt{d}}\Big),
    $$
where
$$
\omega(\xi):=\frac{1}{2\pi i}\int_{(c)} \pi^{-\frac{s}{2}}\frac{\Gamma(\frac s2+\frac14)}{\Gamma(\frac14)}\xi^{-s}\frac{\mmd s}{s}
$$
with $c>0$. The function $\omega(x)$ is real-valued, smooth on $(0,+\infty)$. Moreover, we have $\omega(\xi)=1+O(\xi^{1/2-\varepsilon})$ as $\xi \to 0$, and $\omega(\xi) \ll \mme^{-\xi}$ as $\xi \to \infty$. 
    \end{lem}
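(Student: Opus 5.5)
The statement to prove is Lemma~\ref{appox}, the approximate functional equation. The plan is the standard contour-shift argument based on the functional equation of $L(s,\chi_d)$.

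Let $\chi_d$ be primitive modulo $|d|$ with $\chi_d(-1)=1$, so that $\mathfrak a=0$. (For $d<0$ one would use the analogous weight with $\Gamma(\frac s2+\frac34)$; I read $\sqrt d$ as $\sqrt{|d|}$.) The completed $L$-function
$$\Lambda(s,\chi_d)=\Big(\frac{|d|}{\pi}\Big)^{s/2}\Gamma\big(\tfrac s2\big)L(s,\chi_d)$$
is entire for $\chi_d$ nonprincipal. It satisfies $\Lambda(s,\chi_d)=\Lambda(1-s,\chi_d)$, because the root number of a real primitive character equals $1$ by Gauss's evaluation of the quadratic Gauss sum. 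It is bounded in vertical strips away from the poles of $\Gamma$, and it decays rapidly in $|\operatorname{Im}s|$ by Stirling's formula.

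Consider
$$I=\frac1{2\pi i}\int_{(c)}\frac{\Lambda(\frac12+s,\chi_d)}{\Gamma(\frac14)}\Big(\frac{|d|}{\pi}\Big)^{-1/4}\frac{\mmd s}{s}$$
with $c>\frac12$.

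\textbf{Step 1.} Expand $L(\frac12+s,\chi_d)=\sum_n\chi_d(n)n^{-1/2-s}$, which converges absolutely on $\operatorname{Re}s=c$, and interchange sum and integral. This gives
$$I=\sum_n\frac{\chi_d(n)}{\sqrt n}\,\omega\Big(\frac{n}{\sqrt{|d|}}\Big),$$
since $(|d|/\pi)^{s/2}n^{-s}=\pi^{-s/2}(n/\sqrt{|d|})^{-s}$ and $\Gamma(\frac12\cdot(\frac12+s))=\Gamma(\frac s2+\frac14)$.

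\textbf{Step 2.} Shift the contour to $\operatorname{Re}s=-c$. The only pole crossed is the one at $s=0$, with residue $L(\frac12,\chi_d)$; the horizontal segments vanish by Stirling. On the new line, substitute $s\mapsto-s$ and use $\Lambda(\frac12-s)=\Lambda(\frac12+s)$. This turns the remaining integral into $-I$. Hence $I=L(\frac12,\chi_d)-I$, that is, $L(\frac12,\chi_d)=2I$.

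\textbf{Step 3: properties of $\omega$.} Real-valuedness follows by pairing $s$ with $\bar s$ on the contour. Smoothness on $(0,\infty)$ follows by differentiating under the integral, which is justified by the exponential decay of $\Gamma$ on vertical lines. For $\xi\to0$, shift the contour to $\operatorname{Re}s=-\frac12+\varepsilon$. This picks up the residue $1$ at $s=0$ and passes no other pole, since the first pole of $\Gamma(\frac s2+\frac14)$ is at $s=-\frac12$. The remaining integral is $O(\xi^{1/2-\varepsilon})$. For $\xi\to\infty$, move $c$ to the right and estimate $\Gamma(\frac c2+\frac14+it)\ll\Gamma(\frac c2+\frac14)e^{-|t|}$ to get
$$\omega(\xi)\ll\Gamma\big(\tfrac c2+\tfrac14\big)(\sqrt\pi\,\xi)^{-c}.$$
Optimizing at $c\asymp\xi^2$ gives decay like $e^{-\pi\xi^2/2}$, which is much stronger than $e^{-\xi}$.

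\textbf{Main obstacle.} The only genuinely delicate inputs are that the root number equals $1$ for real primitive characters (Gauss sum sign) and the uniform Stirling bounds needed to justify the contour shifts. Both are standard; the remaining steps are routine.
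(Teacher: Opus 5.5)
Your argument is the standard contour-shift proof. That is exactly what the cited source (Soundararajan's Lemmas 2.1--2.2) does; the paper itself gives no proof beyond the citation. The following parts are all correct: Steps 1--2, the residue computation, the sign bookkeeping under $s\mapsto -s$, real-valuedness, smoothness, and the small-$\xi$ estimate. Your restriction to even $\chi_d$ with $\sqrt{|d|}$ is also right, and not just a convention: for $d<0$ the identity genuinely needs the weight $\Gamma(\frac s2+\frac34)/\Gamma(\frac34)$.

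The one faulty step is the large-$\xi$ bound. The estimate $\Gamma(\frac c2+\frac14+it)\ll\Gamma(\frac c2+\frac14)e^{-|t|}$ is not uniform in $c$, yet you let $c\asymp\xi^2\to\infty$. The product formula gives
\[
|\Gamma(\sigma+i\tau)|=\Gamma(\sigma)\prod_{n\ge0}\Big(1+\frac{\tau^2}{(\sigma+n)^2}\Big)^{-1/2}.
\]
So the ratio $|\Gamma(\sigma+i\tau)|/\Gamma(\sigma)$ is $\asymp1$ for $|\tau|\le\sqrt{\sigma}$, and the implied constant must grow with $c$.

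The repair is easy. Keeping only the factors $n=0,1$ gives $|\Gamma(\sigma+i\tau)|\le\Gamma(\sigma)\big(1+\tau^2/(\sigma+1)^2\big)^{-1}$. This yields $\int_{(c)}|\Gamma(\frac s2+\frac14)|\,|\mmd s|/|s|\ll\Gamma(\frac c2+\frac14)$ uniformly for $c\ge1$, so your bound $\omega(\xi)\ll\Gamma(\frac c2+\frac14)(\sqrt\pi\,\xi)^{-c}$ does hold. Alternatively, Mellin inversion shows directly that $\omega(\xi)=\Gamma(\frac14,\pi\xi^2)/\Gamma(\frac14)$, an incomplete gamma function, whence $\omega(\xi)\ll e^{-\pi\xi^2}$.

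A small correction as well: optimizing at $c\approx2\pi\xi^2$ gives exponent $-\pi\xi^2$, not $-\pi\xi^2/2$. Either is far more than the $e^{-\xi}$ needed.
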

\begin{proof}
    This is \cite[Lemmas 2.1 and 2.2]{Sound0}, also see \cite[Lemma 7]{DM}.
\end{proof}

The following result gives conditional estimates for the mean values of quadratic characters. This improves the earlier unconditional results due to \cite[Lemma 4.1]{GS}.
    \begin{lem}
    \label{charsum}
	Assume GRH. Let $n=n_1 n_2^2$ be a positive integer with $n_1$ the square-free part of $n$. Then for any $\varepsilon>0$, we obtain
	\begin{align*}
	\sum_{|d|\le X\atop d\in\F} \chi_{d}(n)=\frac{X}{\zeta(2)}\prod_{p|n}\Big(\frac{p}{p+1}\Big) \mathds{1}_{n=\square}+ O\big(X^{1/2+\varepsilon}g_1(n_1)g_2(n_2)\big),
	\end{align*}
	where  $g_1(n_1)=\exp\left((\log n_1)^{1-\varepsilon}\right)$ arises from the square-free component, while $g_2(n_2)=\sum_{q\mid n_2}\frac{\mu^2(q)}{q^{1/2+\varepsilon}}$ originates from the square component of the modulus, and ${\mathds{1}}_{n=\square}$ indicates the indicator function of the square numbers. 
\end{lem}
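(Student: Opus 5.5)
The plan is to detect the fundamental discriminants through squarefree numbers, remove the squarefree condition with Möbius inversion, and then evaluate the resulting character sums. The main term comes from the case where $\chi_d(n)$ is essentially principal. In every other case the sum is nonprincipal, and GRH gives square-root cancellation up to a subexponential factor in the conductor.

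\textbf{Reduction to character sums.} Every $d\in\F$ is of exactly one of two forms: a squarefree $d\equiv 1 \pmod 4$, or $d=4m$ with $m$ squarefree and $m\equiv 2,3\pmod 4$. I would treat each residue class and each sign of $d$ separately. Write $n=n_1n_2^2$. Then $\chi_d(n)=\chi_d(n_1)\mathds{1}_{(d,n_2)=1}$. Remove the coprimality condition by writing $\mathds{1}_{(d,n_2)=1}=\sum_{q\mid (d,n_2)}\mu(q)$; only squarefree $q$ contribute. Remove the squarefree condition by writing $\mu^2(d)=\sum_{a^2\mid d}\mu(a)$. After these substitutions, the sum over $d$ becomes a sum over $q\mid n_2$ and over $a$, with an inner sum over $d'\le X/(a^2q)$ in a fixed residue class mod $4$ (or mod $8$) of $\psi(d')$. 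Here $\psi$ is the Jacobi symbol $(\cdot/n_1)$, twisted by a bounded-modulus character and multiplied by the constant $\chi_{a^2q}(n_1)$.

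\textbf{Splitting the sum over $a$.} Fix a parameter $Y$ and split the sum over $a$ at $Y$. For $a>Y$, bound trivially: the contribution is $\ll \sum_{q\mid n_2}\sum_{a>Y}X/(a^2q)\ll X\tau(n_2)/Y$.

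\textbf{The case $n_1=1$.} For $a\le Y$ with $n_1=1$, the inner character is principal, apart from congruence conditions mod $4$. The inner sum is then $X/(a^2q)$ times a local density, with an error $O(1)$. Resumming over $a$ and $q$ with the Möbius weights gives
$$\frac{X}{\zeta(2)}\prod_{p\mid n}\frac{p}{p+1}.$$
This uses the standard computation of the density of fundamental discriminants coprime to $n$. The errors are $O(X/Y)$ from completing the $a$-sum and $O(Y\tau(n_2))$ from the $O(1)$ terms.

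\textbf{The case $n_1>1$.} Here $\psi$ is nonprincipal, with conductor $\asymp n_1$ up to bounded factors. I would prove the bound
$$\sum_{m\le x}\psi(m)\ll x^{1/2+\varepsilon}\exp\big((\log n_1)^{1-\varepsilon}\big)$$
by Perron's formula, shifting the contour to $\Re s=\tfrac12+\varepsilon$. The input is the GRH bound
$$\log |L(\sigma+it,\psi)|\ll \frac{(\log (n_1(|t|+2)))^{2-2\sigma}}{\log_2 (n_1(|t|+2))}+\log_2(n_1(|t|+2))$$
for $\sigma\ge \tfrac12+\varepsilon/2$, which is the classical Littlewood-type argument. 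On the line $\Re s=\tfrac12+\varepsilon$ this gives $|L|\ll \exp((\log n_1)^{1-\varepsilon})(|t|+2)^{\varepsilon}$. Truncate Perron at height $T=x$ and choose parameters so that the horizontal segments and the truncation error are absorbed.

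Applying this with $x=X/(a^2q)$ and summing over $a\le Y$ and $q\mid n_2$ gives
$$X^{1/2+\varepsilon}g_1(n_1)\sum_{q\mid n_2}\frac{\mu^2(q)}{q^{1/2+\varepsilon}}\sum_{a}a^{-1-2\varepsilon}\ll X^{1/2+\varepsilon}g_1(n_1)g_2(n_2).$$

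\textbf{Choice of $Y$.} Taking $Y=X^{1/2}$ balances the tail $X/Y$ against $Y$. Both are $O(X^{1/2}\tau(n_2))$, and $\tau(n_2)\ll X^{\varepsilon}$ whenever $n$ has polynomial size. For larger $n$, the factor $\tau(n_2)$ is absorbed into the stated error after renaming $\varepsilon$, or by tracking the divisor weight $\mu^2(q)q^{-1/2-\varepsilon}$ directly. This gives the lemma.

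\textbf{Main obstacle.} The delicate step is the uniform GRH bound for the character sum. The dependence on $n_1$ must come out as $\exp((\log n_1)^{1-\varepsilon})$ rather than a power $n_1^{\varepsilon}$, uniformly in $x$ and in the auxiliary twist. I would get this by carefully quoting the conditional bound for $\log L$ slightly to the right of the half line, and by taking the contour at distance $\varepsilon$ from $\tfrac12$, so that the exponent $2-2\sigma$ equals $1-2\varepsilon$.
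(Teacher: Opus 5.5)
The paper does not prove this lemma; it quotes it from Darbar--Maiti \cite[Lemma 2]{DM}. Your proposal reconstructs the standard argument behind such a statement: the conditional analogue of \cite[Lemma 4.1]{GS}, with P\'olya--Vinogradov replaced by Perron's formula and the GRH bound for $\log|L|$ on $\Re s=\frac12+\varepsilon$. What this gains is a self-contained account of where $g_1$ and $g_2$ come from. The overall route works: the factorisation $\chi_d(n)=\chi_d(n_1)\mathds{1}_{(d,n_2)=1}$, the two M\"obius inversions, the density computation for the main term, and the non-principal bound $x^{1/2+\varepsilon}\exp((\log n_1)^{1-\varepsilon})$ are all fine.

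In the non-principal case you do not need the cut at $Y$. The inner sum is empty once $a^2q>X$, so you can apply the character-sum bound for every $a$, and $\sum_a (X/(a^2q))^{1/2+\varepsilon}\ll (X/q)^{1/2+\varepsilon}$. One small correction: $q\mid d$ and $a^2\mid d$ mean that $[q,a^2]=a^2q/(a,q)$ divides $d$, not $a^2q$. This costs only a factor $\prod_p(1+p^{-1-2\varepsilon})\ll_\varepsilon 1$.

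\textbf{The step that fails.} The error term in the square case $n_1=1$ is wrong as written. With the uniform cut $Y=X^{1/2}$ you get $O(X^{1/2}2^{\omega(n_2)})$. This is not $O(X^{1/2+\varepsilon}g_2(n_2))$ after renaming $\varepsilon$, because $g_2(n_2)=\prod_{p\mid n_2}(1+p^{-1/2-\varepsilon})$ grows far more slowly than $2^{\omega(n_2)}$. For example, let $n_2$ be the product of the first $k\asymp(\log X)^2$ primes. Then $g_2(n_2)=X^{o(1)}$, while $2^{\omega(n_2)}$ exceeds every power of $X$.

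The lemma is stated uniformly in $n$, and the paper relies on that uniformity. There $n_2$ contains $(m,n)$ for $m,n\in\M$, so it can have about $\pi(y_\M)$ prime factors, and the paper then uses $g_2\le\exp(P_+(n)^{1/2-\varepsilon})$.

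\textbf{The fix.} Your second option, tracking the divisor weight directly, is the right one, and it is easy to carry out. Drop $Y$ altogether. For each $(a,q)$, the number of $d'\le x$ in a fixed residue class is $cx+O(\min(1,x))$, since it is $0$ when $x<1$. Then
\[
\sum_{a\ge 1}\min\Big(1,\frac{X}{a^2q}\Big)\ll \min\Big(\Big(\frac{X}{q}\Big)^{1/2},\frac{X}{q}\Big)\le X^{1/2+\varepsilon}q^{-1/2-\varepsilon}.
\]
The last inequality holds both for $q\le X$ and for $q>X$. Summing over squarefree $q\mid n_2$ gives exactly $X^{1/2+\varepsilon}g_2(n_2)$. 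The main term comes from the complete, absolutely convergent sum over $a$, so no tail-completion error arises.
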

\begin{proof}
    This is \cite[Lemma 2]{DM}.
\end{proof}
It is clear for $g_1(n_0)$ that 
$$g_1(n_0)\le  \exp\big((\log n)^{1-\varepsilon}\big).$$
For $g_2(n_1)$ we have
$$g_2(n_1)=\prod_{p|n_1}\Big(1-\frac1{p^{1/2+\varepsilon}}\Big)\le\exp\big((\log n_1)^{\frac12-\varepsilon}\big)\le\exp\big((\log n)^{\frac12-\varepsilon}\big).$$
Moreover, let $P_+(n)$ denote the largest prime divisor of $n$, then we also have
$$g_1(n_0)\le  \exp\big((P_+(n))^{1-\varepsilon}\big),\;\;\;\;\;\;\;g_2(n_1)\le\exp\big((P_+(n))^{\frac12-\varepsilon}\big).$$

Let \((m,n)\) and \([m,n]\) denote the greatest common divisor and the least common multiple of positive integers \(m\) and \(n\), respectively. The following result for GCD sums plays a key role in the proof of Theorem \ref{thm1}.
\begin{lem}\label{GCD}
    Let $\M$ be any set of positive square-free integers with $|\M|=N$. Then as $N\to\infty$, we have
   $$\max_{|\M|=N}\sum_{m,n\in\M}\sqrt{\frac{(m,n)}{[m,n]}}=N\exp\bigg((2+o(1))\sqrt{\frac{\log N\log_3N}{\log_2N}}\bigg).$$
\end{lem}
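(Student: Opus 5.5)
This estimate is essentially due to de la Bret\`eche and Tenenbaum, and the cleanest route is to follow their argument with the extra constraint that $\M$ consists of square-free integers. I would prove the lower and upper bounds separately. Throughout, write
$$
S(\M)=\sum_{m,n\in\M}\sqrt{\frac{(m,n)}{[m,n]}},
$$
and note that $\sqrt{(m,n)/[m,n]}=\prod_{p\mid mn,\,p\nmid (m,n)}p^{-1/2}$ for square-free $m,n$.

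\textbf{Lower bound (construction).} I would take a Bondarenko--Seip type set.
\begin{enumerate}[label=(\roman*)]
\item Put $L=\sqrt{\log N\log_3N/\log_2N}$.
\item Let $P$ be the set of primes $p$ with $e\log N\log_2N<p\le \exp((\log_2N)^{\gamma})\log N\log_2N$, for a suitable $\gamma<1$.
\item Define a completely multiplicative weight $f$ supported on square-free integers composed of primes in $P$, with $f(p)=\frac{L}{\sqrt{\log N\log_2N}}\cdot\frac{1}{\log p-\log(\log N\log_2N)}$.
\item Let $\M$ be the set of square-free $n$ supported on $P$ with $f(n)^2$ large, say those $n$ for which the number of prime factors in each dyadic block is close to its $f$-weighted expectation.
\end{enumerate}
Dropping all terms except $m=n\ell$, $\ell$ square-free, gives
$$
S(\M)\ \ge\ \sum_{n\in\M}\ \sum_{\substack{\ell:\ n\ell\in\M}}\ell^{-1/2}\cdot(\text{symmetry factor}).
$$
This is compared with the weighted quantity $\prod_{p\in P}\bigl(1+f(p)^2+2f(p)p^{-1/2}\bigr)\big/\prod_{p\in P}\bigl(1+f(p)^2\bigr)$. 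For square-free weights the $p$-factor contributes $\exp(2f(p)/\sqrt p)$ rather than the $\exp(2\sqrt2\cdots)$ available when prime powers are allowed. Summing $2f(p)/\sqrt p$ over $P$ by the prime number theorem gives $(2+o(1))L$. A Rankin-type truncation then shows that the restriction to $|\M|=N$ costs only $o(L)$ in the exponent.

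\textbf{Upper bound.} For arbitrary square-free $\M$ of size $N$, I would follow de la Bret\`eche--Tenenbaum.
\begin{enumerate}[label=(\roman*)]
\item Write $\sqrt{(m,n)/[m,n]}$ as a convolution, using $(m,n)^{1/2}[m,n]^{-1/2}=(mn)^{-1/2}(m,n)$ and $(m,n)=\sum_{d\mid (m,n)}\varphi(d)$.
\item Apply Cauchy--Schwarz with a multiplicative weight $\theta^{\omega(\cdot)}$ depending on a parameter to be optimised.
\item Bound the resulting Euler products, now with only the local factors at $p$ and $p^0$ because of square-freeness.
\item Split primes into small, $p\le \log N\log_2 N$, where one uses $|\M|=N$ and a counting argument, and large, where the Euler product estimate gives $\exp(\sum_p 2f(p)/\sqrt p)$ at the optimal choice.
\end{enumerate}
Optimising yields the same constant $2$.

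\textbf{Main obstacle.} The hardest step is the upper bound: matching the constant $2$ exactly requires the delicate optimisation of de la Bret\`eche--Tenenbaum over the contribution of small primes. In practice I would first try to quote their Theorem directly, observing that restricting to square-free sets amounts to replacing their local factor with its first-order truncation. Only if that fails would I redo their optimisation.
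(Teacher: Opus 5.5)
\textbf{The lower bound has a genuine gap.} After you discard every pair except $m=n\ell$ (and symmetrically $n=m\ell$), the weighted quantity your sum actually corresponds to is
\[
\frac{\sum_{(n,\ell)=1}f(n)f(n\ell)\,\ell^{-1/2}}{\sum_{n}f(n)^2}=\prod_{p\in P}\Bigl(1+\frac{f(p)p^{-1/2}}{1+f(p)^2}\Bigr)\approx\exp\Bigl(\sum_{p\in P}\frac{f(p)}{\sqrt p}\Bigr).
\]
This has a single $f(p)p^{-1/2}$, not $2f(p)p^{-1/2}$. The ``symmetry factor'' multiplies the sum by $2$; it does not double the exponent. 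So the comparison you describe, done honestly, yields only $N\exp((1+o(1))L)$ with $L=\sqrt{\log N\log_3N/\log_2N}$, which is the Bondarenko--Seip constant $1$. Fed into the proof of Theorem~\ref{thm1} with $N=X^{1/4-\alpha}$, this gives the exponent $\tfrac12$ of Darbar--Maiti, not $1$.

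The term $2f(p)p^{-1/2}$ in the Euler product you quote comes from pairs $m=ga$, $n=gb$ with $(a,b)=1$ and \emph{both} $a,b>1$, which are exactly the pairs you threw away. If $\M$ is defined by prescribing exactly the number $r_k$ of prime factors in each block $P_k$, as in your (iv), there are no nontrivial divisor pairs at all. The gain comes from exchanging $j$ primes of $m$ in $P_k$ for $j$ other primes of $P_k$. For blocks short enough that all $p\in P_k$ are $\approx p_k$, and with $r_k\approx f(p_k)^2|P_k|$, this contributes roughly
$\sum_j\binom{r_k}{j}\binom{|P_k|-r_k}{j}p_k^{-j}\approx\exp\bigl(2\sqrt{r_k|P_k|/p_k}\bigr)\approx\exp\bigl(2\sum_{p\in P_k}f(p)p^{-1/2}\bigr)$
per block. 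Capturing these exchange pairs is the missing idea, and it is precisely the improvement from $1$ to $2$ that the paper relies on.

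\textbf{Smaller points and comparison with the paper.}
\begin{itemize}
\item Your weight in (iii) is mis-normalised: it lacks the factor $p^{-1/2}$ and the scale $\sqrt{\log N\log_2N/\log_3N}$. The Bondarenko--Seip choice is $f(p)=\sqrt{\log N\log_2N/\log_3N}\,\bigl(\sqrt p\,(\log p-\log_2N-\log_3N)\bigr)^{-1}$, which gives $\sum_{p\in P}f(p)p^{-1/2}\sim\gamma L$. As written, neither this asymptotic nor $|\M|\le N$ holds.
\item $f$ should be multiplicative rather than completely multiplicative.
\item You need $\gamma\to1$ at the end.
\item Your upper-bound outline is too schematic to check. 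It is also not needed for Theorem~\ref{thm1}, which only uses the existence of a square-free $\M$ with the stated sum and $y_\M\le(\log N)^{1+o(1)}$.
\end{itemize}
The paper proves nothing here: it simply quotes \cite[Eq.~(1.5)]{BT} for both directions. You should do the same unless you supply a lower-bound construction that genuinely captures the exchange pairs.
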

\begin{proof}
    This is \cite[Eq. (1.5)]{BT}.
\end{proof}
Note that in the proof, the choice for the set $\M$ satisfies that
$$y_\M :=\max_{m\in\M}P_+(m)\le (\log N)^{1+o(1)}.$$ 

   \section{Proof of Theorem \ref{thm1}}

   Let \(X\) be large, and let \(N=X^{1/4-\alpha}\) for small \(\alpha>0\). Define the resonator \(R_d = \sum_{n \in \mcm} \chi_d(n)\), where \(d \in \F\), and the set \(\M\) satisfies the condition of Lemma \ref{GCD}. Let \(P_+(n)\) denote the largest prime divisor of \(n\). Note that the construction of $\M$ satisfies $$y_\M :=\max_{m\in\M}P_+(m)\le (\log N)^{1+o(1)}.$$ Then, we define the following two sums:
    \begin{align*}
        S_1 &: = S_1(R_d,X) =\sum_{\substack{X<|d|\le2X \\ d \in \F}}R_d^2 , \\
        S_2 &: = S_2(R_d,X) =\sum_{\substack{X<|d|\le2X \\ d \in \F}}L(\tfrac12,\chi_d) R_d^2.
    \end{align*}

Trivially, we have
\begin{align}
    \label{max}
    \max_{\substack{X<|d|\le2X \\ d \in \F}}L(\tfrac12,\chi_d) \ge \frac{S_2}{S_1}.
\end{align}
We next find an effective lower bound for the ratio \(S_2/S_1\). For \(S_1\), expanding \(R_d^2\) yields that
\begin{align*}
    S_1 &= \sum_{\substack{X<|d|\le2X \\ d \in \F}}R_d^2 =\sum_{m,n \in \mcm}\sum_{\substack{X<|d|\le2X \\ d \in \F}}\chi_d(m)\chi_d(n) \\
    & = \sum_{m,n \in \mcm}\Big( \sum_{\substack{X<|d|\le2X \\ d \in \F, mn=\square}}+\sum_{\substack{X<|d|\le2X \\ d \in \F,mn\neq\square}}\Big)\chi_d(mn).
\end{align*}

Applying Lemma \ref{charsum}, we obtain
\begin{align*}
    S_1 = \frac{X}{\zeta(2)}\sum_{\substack{m,n \in \mcm \\ mn=\square}} \prod_{p \mid mn}\Big(\frac{p}{p+1}\Big) + O\Big(X^{1/2+\varepsilon}\sum_{\substack{m,n\ge 1 \\ mn=n_1n_2^2,\mu(n_1)\neq0}}g_1(n_1)g_2(n_2) \Big).
\end{align*}
Let \(h(1)=1\) and 
\[
h(n) = \prod_{p \mid n}\Big( \frac{p}{p+1} \Big)
\]
for \(n \ge 2\). It is easy to see that \(0<h(n)\le 1\) for all \(n \ge 1\). Combining this with the upper bounds for $g_1$ and $g_2$ stated after Lemma \ref{charsum}, we derive that
\[
     S_1 = \frac{X}{\zeta(2)}\sum_{\substack{m,n \in \mcm \\ mn=\square}}h(mn) + O\Big(X^{1/2+\varepsilon}\sum_{m,n\in \M}1\Big).
\]
Since \(\M\) is a set of square-free integers, for any \(m,n\in\M\), \(mn=\square\) implies \(m=n\). Combining this with \(|\M|=N\), we obtain
\begin{align*}
     S_1 = \frac{X}{\zeta(2)} \sum_{m \in \M}h(m) + O\big( X^{1/2+\varepsilon} N^{2}\big) \le \frac{X}{\zeta(2)}N+O\big( X^{1/2+\varepsilon} N^{2}\big) .
\end{align*}
It follows from \(N=X^{1/4-\alpha}\) that the error is negligible, and
\begin{equation}
    \label{S1upper}
    S_1 \le \frac{X}{\zeta(2)}N+O(X^{1-2\alpha+\varepsilon}).
\end{equation}

On the other hand, for \(S_2\), using Lemma \ref{appox} and expanding \(R_d^2\) again, we have
\begin{align*}
    S_2 &= 2 \sum_{m,n\in\M}\sum_{k \ge 1}\frac{1}{\sqrt{k}} \sum_{\substack{X<|d|\le2X \\ d \in \F}} \chi_d(kmn)\omega\Big(\frac{k}{\sqrt{d}}\Big) \\
    & = 2 \sum_{m,n \in \mcm}  \sum_{k \ge 1}\frac{1}{\sqrt{k}}\Big( \sum_{\substack{X<|d|\le2X \\ d \in \F, kmn=\square}}+\sum_{\substack{X<|d|\le2X \\ d \in \F,kmn\neq\square}}\Big)\chi_d(kmn)\omega\Big(\frac{k}{\sqrt{d}}\Big).
\end{align*}
Applying Lemma \ref{charsum} once more to the innermost sum in the above expression, we deduce that
\begin{align}
\label{S2eq}
    S_2 &= \frac{2X}{\zeta(2)}\sum_{m,n\in\M}\sum_{\substack{k \ge 1\\kmn=\square}}\frac{1}{\sqrt{k}}h(kmn)\int_1^2\omega\Big( \frac{k}{\sqrt{Xu}}\Big) \mmd u \nonumber\\
    &\quad + O\Big(X^{1/2+\varepsilon}\sum_{k \ge 1}\frac{1}{\sqrt{k}} \mme^{-k/\sqrt{X}}\sum_{\substack{m,n\ge 1 \\ mn=n_1n_2^2,\mu(n_1)\neq0}}g_1(n_1)g_2(n_2) \Big) \nonumber\\
    & =: D+O(E).
\end{align}
By an argument similar to \cite[pp. 13-14]{DM}, the error of \(S_2\) satisfies
\begin{equation}
    \label{error}
    E \ll X^{3/4+\varepsilon}N^\varepsilon|\M|^2 \ll X^{3/4+\varepsilon}N^{2+\varepsilon}.
\end{equation}
For the main term \(D\) of \(S_2\), since both \(h(n)\) and \(\omega(\xi)\) are non-negative, we may retain only the term with 
\[
k = \frac{[m,n]}{(m,n)},
\]
which yields 
\begin{align}
\label{Dlower}
    D & \ge \frac{2X}{\zeta(2)} \sum_{m,n \in \M}h\Big(\frac{[m,n]mn}{(m,n)}\Big)\sqrt{\frac{(m,n)}{[m,n]}} \int_1^2\omega\Big(\frac{[m,n]}{(m,n)\sqrt{Xu}} \Big)\mmd u \nonumber \\
    & \ge \frac{2X}{\zeta(2)} \sum_{\substack{m,n\in\M \\ [m,n]/(m,n) \le X^\varepsilon}} h(mn)\sqrt{\frac{(m,n)}{[m,n]}} \big(1+O\big((X^{-1/2+\varepsilon})^{1/2-\varepsilon}\big)\big).
\end{align}
Here, in the last step, we impose a restriction on \([m,n]/(m,n) \le X^\varepsilon\) in order to employ the asymptotic formula for \(\omega(\xi)\) when \(\xi\) is small from Lemma \ref{appox}.

By the definition of \(\M\) and \(h(n)\), we have
\[
D \gg \frac{X}{\zeta(2)} \prod_{p \le (\log N)^{1+o(1)}} \Big(\frac{p}{p+1}\Big) \sum_{\substack{m,n\in\M \\ [m,n]/(m,n) \le X^\varepsilon}} \sqrt{\frac{(m,n)}{[m,n]}} .
\]
Note that
\begin{align*}
 \prod_{p \le (\log N)^{1+o(1)}} \Big(\frac{p}{p+1}\Big) &=\exp\Big(\sum_{p \le (\log N)^{1+o(1)}} \log \Big( 1-\frac{1}{p+1} \Big)\Big)   \\
 & \ge \exp \Big(-\sum_{p \le (\log N)^{1+o(1)}} \frac{1}{p}\Big).
\end{align*} 
Thus, the prime number theorem implies 
\begin{equation}
    \label{prodlower}
    \prod_{p \le (\log N)^{1+o(1)}} \Big(\frac{p}{p+1}\Big) \ge \exp(-\log_3 N).
\end{equation}
We now handle the remaining sum. Following the argument in \cite[p. 25]{BT}, for fixed \(m \in \M\), we have
\[
\sum_{n \in \M}\Big( \frac{(m,n)}{[m,n]} \Big)^{1/3} \le \prod_{p \le y_\M}\Big(1+\frac{2}{p^{1/3}-1}\Big) \ll \exp \big(y_\M^{2/3}\big).
\]
Furthermore, Rankin's trick shows that
\begin{align*}
    \sum_{\substack{m,n\in\M \\ [m,n]/(m,n) \le X^\varepsilon}} \sqrt{\frac{(m,n)}{[m,n]}} &= \Big(\sum_{m,n\in\M} - \sum_{\substack{m,n\in\M \\ [m,n]/(m,n) > X^\varepsilon}}\Big) \sqrt{\frac{(m,n)}{[m,n]}} \nonumber \\
    & \ge \sum_{m,n\in\M}  \sqrt{\frac{(m,n)}{[m,n]}} -X^{\varepsilon/6}\sum_{m,n\in\M} \Big(\frac{(m,n)}{[m,n]}\Big)^{1/3}\nonumber \\
    & \gg\sum_{m,n\in\M}  \sqrt{\frac{(m,n)}{[m,n]}} -X^{\varepsilon/6}|\M| \exp \big(y_\M^{2/3}\big).
\end{align*}
Since \(y_\M \le  (\log N)^{1+o(1)}\), applying Lemma \ref{GCD}, we have
\begin{align}
    \label{gcdlower}
 \sum_{\substack{m,n\in\M \\ [m,n]/(m,n) \le X^\varepsilon}} \sqrt{\frac{(m,n)}{[m,n]}} \ge N  \exp\bigg((2+o(1))\sqrt{\frac{\log N\log_3N}{\log_2N}}\bigg).
\end{align}
Combining \eqref{gcdlower} with \eqref{Dlower} and \eqref{prodlower}, we obtain the following lower bound for \(D\) as \(X\) is large:
\begin{equation}
    \label{Dlower2}
    D \ge \big(1+o(1)\big) \frac{X}{\zeta(2)} N \exp\bigg((2+o(1))\sqrt{\frac{\log N\log_3N}{\log_2N}}\bigg).
\end{equation}

It follows from \eqref{S1upper}, \eqref{S2eq}, \eqref{error} and \eqref{Dlower2} that
\[
\frac{S_2}{S_1} \ge \exp\bigg((2+o(1))\sqrt{\frac{\log N\log_3N}{\log_2N}}\bigg).
\]
Substituting this into \eqref{max} yields that
\begin{align*}
     \max_{\substack{X<|d|\le2X \\ d \in \F}}L(\tfrac12,\chi_d) & \ge   \exp\bigg((2+o(1))\sqrt{\frac{\log N\log_3N}{\log_2N}}\bigg) \\
     & \ge  \exp\bigg(\bigg(2\sqrt{\frac{1}{4}-\alpha}+o(1)\bigg)\sqrt{\frac{\log X\log_3X}{\log_2X}}\bigg).
\end{align*}
Taking \(\alpha \to 0^+\), we complete the proof of Theorem \ref{thm1}.


	\section*{Acknowledgements}
	Z. Dong is supported by the National
	Natural Science Foundation of China (Grant No. 	1240011770). W. Wang is supported by the National
	Natural Science Foundation of China (Grant No. 1250012812). H. Zhang is supported by the Fundamental Research Funds for the Central Universities (Grant No. 531118010622), the National
	Natural Science Foundation of China (Grant No. 1240011979) and the Hunan Provincial Natural Science Foundation of China (Grant No. 2024JJ6120).

	\normalem

\end{document}